\documentclass{article} 

\usepackage{amsmath}
\usepackage{amsthm}
\usepackage{amssymb}

\usepackage{url}
\usepackage{xcolor}

\usepackage[colorlinks=true,urlcolor=blue,linkcolor=blue,citecolor=blue,plainpages=false,pdfpagelabels]{hyperref}

\newtheorem{theorem}{Theorem}[section]
\newtheorem*{theorem*}{Theorem}

\newtheorem{proposition}[theorem]{Proposition}
\newtheorem{corollary}[theorem]{Corollary}
\newtheorem{lemma}[theorem]{Lemma}
\newtheorem{remark}[theorem]{Remark}

\newcommand{\bprop}{\begin{proposition}}
\newcommand{\eprop}{\end{proposition}}

\newcommand{\blem}{\begin{lemma}}
\newcommand{\elem}{\end{lemma}}

\newcommand{\N}{{\mathbb N}}

\newcommand{\eps}{\varepsilon}

\newcommand{\roneq}{\sigma_1}
\newcommand{\rtwoq}{\sigma_2}
\newcommand{\rthreeq}{\sigma_3}
\newcommand{\rfourq}{\sigma_4}
\newcommand{\rfiveq}{\Lambda}

\newcommand{\rarxn}{\Gamma_1}
\newcommand{\raryn}{\Gamma_2}
\newcommand{\uraryn}{\Gamma_3}
\newcommand{\traryn}{\Gamma_4}
\newcommand{\urarxn}{\Gamma_5}
\newcommand{\trarxn}{\Gamma_6}
\newcommand{\rxnyn}{\Omega}
\newcommand{\uraryntilde}{\widetilde{\uraryn}}

\newcommand{\uraryncat}{\Gamma_0}
\newcommand{\pcat}{P_0}
\newcommand{\rarxns}{\Sigma_1}
\newcommand{\raryns}{\Sigma_2}
\newcommand{\uraryns}{\Sigma_3}
\newcommand{\traryns}{\Sigma_4}
\newcommand{\urarxns}{\Sigma_5}

\newcommand{\rxnyns}{\Theta}

\title{Rates of asymptotic regularity for the alternating Halpern-Mann iteration}

\author{Lauren{\c t}iu Leu{\c s}tean${}^{a,c,d}$ and Pedro Pinto${}^{b}$\\[2mm]
	\footnotesize ${}^{a}$LOS, Faculty of Mathematics and Computer Science, University of Bucharest\\
	\footnotesize ${}^{b}$Department of Mathematics, Technische Universit{\"a}t Darmstadt\\ 
	\footnotesize ${}^{c}$ Simion Stoilow Institute of Mathematics of the Romanian Academy\\
	\footnotesize ${}^{d}$ Institute for Logic and Data Science, Bucharest\\[1mm]
	\footnotesize Emails: \protect\url{laurentiu.leustean@unibuc.ro}, \protect\url{pinto@mathematik.tu-darmstadt.de}
}

\begin{document}

\date{}

\maketitle

\begin{abstract}
In this paper we extend to  $UCW$-hyperbolic spaces the quantitative asymptotic regularity results for the alternating Halpern-Mann iteration obtained by Dinis and the second author  for CAT(0) spaces.  These results are new even for uniformly convex normed spaces. Furthermore, for a particular choice of the parameter sequences, we compute linear rates of asymptotic regularity in $W$-hyperbolic spaces and quadratic rates of $T$- and $U$-asymptotic regularity in CAT(0) spaces.\\

\noindent {\em Keywords:} Mann iteration; Halpern iteration;  Rates of asymptotic regularity; Uniform convexity; Proof mining.  \\

\noindent  {\it Mathematics Subject Classification 2020}: 47J25, 47H09, 03F10.

\end{abstract}

%%%%%%%%%%%%%%%%%%%%%%%%%%%%%%%%%%%%%%%%%%%%%%%%%%%%%%%%%%%%%%%%%%%%%%
\section{Introduction}\label{intro}
%%%%%%%%%%%%%%%%%%%%%%%%%%%%%%%%%%%%%%%%%%%%%%%%%%%%%%%%%%%%%%%%%%%%%

Let  $X$ be a CAT(0) space, $C\subseteq X$   a convex subset, and  $T, U:C\to C$ two nonexpansive mappings.
Dinis and the second author introduced recently \cite{DinPin21} the
alternating Halpern-Mann iteration  $(x_n)$ as follows: if $x_0,u \in C$ and $(\alpha_n)_{n\in\N}$, $(\beta_n)_{n\in\N}$ 
are sequences in $[0,1]$, then 
\begin{equation}\label{e:MannHalpern}
\begin{cases}
x_{2n+1}&=(1-\alpha_n)Tx_{2n} + \alpha_n u,\\
x_{2n+2}&=(1-\beta_n)Ux_{2n+1} + \beta_nx_{2n+1}.
\end{cases}
\end{equation}
Thus, the alternating Halpern-Mann iteration is an iterative scheme that alternates between the well-known 
Halpern \cite{Hal67} and  Mann \cite{Man53,Kra55,Gro72}  iterations. As pointed out in \cite{DinPin21}, 
$(x_n)$ is a generalization of the Tikhonov-Mann iteration, introduced by Cheval and the first author \cite{CheLeu22} as an extension of a modified Mann iteration studied by Yao, Zhou, and Liou \cite{YaoZhoLio09} and rediscovered  by Bo{\c t}, Csetnek, and Meier \cite{BotCseMei19}. The Tikhonov-Mann iteration was recently shown by Cheval, Kohlenbach, and the first author \cite{CheKohLeu23} 
to be essentially the modified Halpern iteration, a generalization of the Halpern iteration due to Kim and Xu \cite{KimXu05}.

Quantitative theorems, providing rates of asymptotic regularity and rates of metastability for $(x_n)$, were proved in \cite{DinPin21}
by using techniques developed in \cite{FerLeuPin19}, under the assumption  that  $T$ and $U$  have common fixed points and  
the parameter sequences $(\alpha_n)$,  $(\beta_n)$ satisfy certain hypotheses. In particular, the results in \cite{DinPin21} show that, 
in CAT(0) spaces, the alternating Halpern-Mann iteration converges strongly to a common fixed point of the mappings.

Rates of asymptotic regularity were obtained  for the Halpern iteration by Kohlenbach and the first author 
\cite{KohLeu12a} in the much more general setting of $W$-hyperbolic spaces \cite{Koh05}, and for the 
Mann iteration by the first author \cite{Leu07} for $UCW$-hyperbolic spaces \cite{Leu07,Leu10}, a class of geodesic 
spaces that generalize both CAT(0) spaces and uniformly convex normed spaces. A natural question is then 

\begin{center}
\textit{to extend to $UCW$-hyperbolic spaces the quantitative asymptotic regularity results obtained in \cite{DinPin21} for 
CAT(0) spaces}.
\end{center}
In this paper,  we give an answer to this question  by computing  uniform rates of ($T$- and $U$-)asymptotic regularity 
for the alternating Halpern-Mann iteration.  These results are new even for uniformly convex normed spaces.  

Moreover, we  compute for the first time, for a particular choice of the 
parameter sequences, linear rates of asymptotic regularity in $W$-hyperbolic spaces and quadratic rates of 
$T$- and $U$-asymptotic regularity in CAT(0) spaces. In order to obtain the linear rates, we adapt to our setting 
a lemma on sequences of real numbers due to Sabach and Shtern \cite{SabSht17}.

Let us define the asymptotic regularity notions that we use. Assume that $(X,d)$ is a metric space. 
A sequence $(z_n)_{n\in\N}$ in $X$ is asymptotically regular if $\lim\limits_{n\to\infty} d(z_n,z_{n+1})=0$. 
A rate of asymptotic regularity of $(z_n)$ is a rate of convergence of  $(d(z_n,z_{n+1}))$ towards $0$. Assume, furthermore, 
that  $\emptyset \ne C\subseteq X$, $S:C\to C$ is  a mapping and $(z_n)$ is a sequence in $C$. 
We say that $(z_n)$ is $S$-asymptotically regular if $\lim\limits_{n\to\infty} d(z_n,Sz_n)=0$. A rate of 
$S$-asymptotic regularity of $(z_n)$ is a rate of convergence 
of  $(d(z_n,Sz_n ))$ towards $0$.

We  recall in the following the main quantitative notions that appear throughout the paper.  If $(a_n)_{n\in\N}$ is a 
sequence in   a metric space $(X,d)$, $a\in X$ and $\varphi:\N\to\N$, then 
\begin{enumerate}
\item $\varphi$ is a  rate of convergence of $(a_n)$ (towards $a$) if $\lim\limits_{n\to\infty} a_n =a$ and 
\[\forall k\in\N\, \forall n\geq \varphi(k) \left(d(a_n,a)\leq \frac1{k+1}\right).\]
\item $\varphi$ is a Cauchy modulus of  $(a_n)$ if $(a_n)$ is Cauchy and 
\[\forall k\in\N\, \forall n\geq \varphi(k)\, \forall p\in\N \left(d(a_{n+p},a_n)\leq 
\frac1{k+1}\right).\]
\end{enumerate}
Consider the series $\sum\limits_{n=0}^\infty b_n$, where $(b_n)_{n\in\N}$ is a sequence of nonnegative reals. We say that the series diverges with 
rate of divergence $\theta:\N\to\N$  if $\sum\limits_{i=0}^{\theta(n)} b_i \geq n$ for all $n\in\N$. Furthermore, if $\sum\limits_{n=0}^\infty b_n$  converges, then a Cauchy modulus of the 
series is a Cauchy modulus of the sequence $\left(\sum\limits_{i=0}^{n} b_i\right)$ of partial sums.

%%%%%%%%%%%%%%%%%%%%%%%%%%%%%%%%%%%%%%%%%%%%%%%%%%%%%%%%%%%%%%%%%%%%%%
\section{Preliminaries}\label{pre}
%%%%%%%%%%%%%%%%%%%%%%%%%%%%%%%%%%%%%%%%%%%%%%%%%%%%%%%%%%%%%%%%%%%%%

\subsection{$UCW$-hyperbolic spaces}

Following \cite{CheLeu22}, a metric space $(X,d)$  endowed with 
a  function $W:X\times X\times [0,1]\to X$ is said to be a $W$-space. As $W$ is thought as a generalization to metric spaces of the usual convex 
combination mapping in normed spaces, we use the notation $(1-\lambda)x + \lambda y$ 
for $W(x,y,\lambda)$. Furthermore, we denote a $W$-space simply by $X$.
A nonempty subset $C\subseteq X$ is said to be convex if  for all  $x,y \in C$ and all $\lambda \in [0,1]$,
$(1-\lambda)x + \lambda y \in C$.

Motivated by proof-theoretical considerations, Kohlenbach  introduced in \cite{Koh05} the class of $W$-hyperbolic spaces (called by him `hyperbolic spaces'). 
This class of $W$-spaces is now broadly used in the study of nonlinear iterations. Let us recall that  
a $W$-hyperbolic space is a $W$-space $X$ satisfying, for all $x,y,w,z\in X$ and all $\lambda,~\tilde{\lambda} \in [0,1]$,
$$
\begin{array}{ll}
\text{(W1)} & d(z,(1-\lambda)x + \lambda y) \le  (1-\lambda)d(z,x)+\lambda d(z,y),\\[1mm]
\text{(W2)} & d((1-\lambda)x + \lambda y,(1-\tilde{\lambda})x + \tilde{\lambda} y) =  \vert \lambda-\tilde{\lambda} \vert d(x,y), \\[1mm]
\text{(W3)} & (1-\lambda)x + \lambda y = \lambda y + (1-\lambda)x, \\[1mm]
\text{(W4)} & d((1-\lambda)x + \lambda z,(1-\lambda)y + \lambda w) \le (1-\lambda)d(x,y)+\lambda d(z,w).
\end{array}
$$
The following properties of a $W$-hyperbolic space $X$ are easy to verify:   for all $x,y\in X$ and all  $\lambda\in[0,1]$,\\[2mm]
$
\begin{array}{ll}
\text{(W5)} & (1-\lambda)x + \lambda x =x, \\[1mm]
\text{(W6)} &  1x+0y = 0y+1x = x, \\[1mm]
\text{(W7)} & d(x,(1-\lambda)x + \lambda y)\!=\!\lambda d(x,y) \text{~and~} d(y,(1-\lambda)x + \lambda y)\!=\!(1-\lambda)d(x,y).
\end{array}
$\\[1mm]
An immediate example of a $W$-hyperbolic space is (a convex subset of) a normed space. An extensive 
discussion on the relations between $W$-hyperbolic spaces and other geodesic spaces can be found in \cite[pp. 384-387]{Koh08}.

The class of uniformly convex $W$-hyperbolic spaces was introduced by the first author in \cite{Leu07}, inspired by \cite[p. 105]{GoeRei84}, as 
a generalization to the nonlinear setting of the well-known uniformly convex normed spaces. Thus, 
a uniformly convex $W$-hyperbolic space is a structure $(X,\eta)$, where $X$ is a $W$-hyperbolic  space and
$\eta: (0,\infty)\times (0,2] \to (0,1]$ is a so-called modulus of uniform convexity, satisfying the following:  
for all $r>0$ and $\eps \in (0,2]$, and for any $x,y,a\in X$,
\[\text{if~} d(x,a),  d(y,a) \leq r,  \text{~and~} d(x,y) \geq \eps r, \text{~then~} d\left(\frac{1}{2}x +\frac{1}{2}y, a\right) \leq (1-\eta(r, \eps))r.\]
Following \cite{Leu10}, an $UCW$-hyperbolic space  is a uniformly convex $W$-hyperbolic 
space $(X,\eta)$ with the property that the modulus of uniform convexity is monotone in the following sense:
\begin{center}
for any $\eps \in (0,2]$, if $0<r\leq s$, then $ \eta(s, \eps) \leq \eta(r, \eps)$. 
\end{center}

The class of $UCW$-hyperbolic spaces turns out to be an appropriate setting for obtaining quantitative results on the asymptotic behaviour  of the Mann iteration for 
(asymptotically) nonexpansive mappings (see \cite{Leu07,Leu10,KohLeu10}), as well as of the Picard iteration for firmly nonexpansive mappings 
(see \cite{AriLeuLop14}).

The following property of  $UCW$-hyperbolic spaces will be used in Section \ref{main}. We refer to \cite[Lemma~2.1(iv)]{Leu10} for its proof.

\begin{lemma}\label{prop_eta-s-r} 
Let $(X,\eta)$ be a $UCW$-hyperbolic space. Assume that $r>0$, $\eps \in (0,2]$, and $x,y,a\in X$ satisfy
\[
d(x,a) \leq r,\, d(y,a) \leq r, \text{~and~}\, d(x,y) \geq \eps r.
\]
For all $\lambda \in[0,1]$ and any   $s\geq r$,
\[
d((1-\lambda)x  + \lambda y, a) \leq \left(1-2\lambda(1-\lambda)\eta(s,\eps)\right)r.
\]
\end{lemma}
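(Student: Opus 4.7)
My plan is to reduce the general $\lambda$ case to the midpoint case $\lambda = 1/2$, where the bound is essentially the definition of uniform convexity, and then to combine this reduction with the $W$-hyperbolic axioms and the monotonicity of $\eta$. First, I would apply the defining property of a uniformly convex $W$-hyperbolic space directly to the hypotheses $d(x,a),\, d(y,a) \leq r$ and $d(x,y) \geq \eps r$, obtaining
\[
d\bigl(\tfrac{1}{2}x + \tfrac{1}{2}y,\, a\bigr) \;\leq\; (1 - \eta(r,\eps))\, r.
\]
Writing $m := \tfrac{1}{2}x + \tfrac{1}{2}y$, I would then invoke the monotonicity assumption on $\eta$: for any $s \geq r$, $\eta(s,\eps) \leq \eta(r,\eps)$, so $d(m,a) \leq (1 - \eta(s,\eps))\,r$. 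Since $2\cdot\tfrac12\cdot\tfrac12 = \tfrac12 \leq 1$, this already settles $\lambda = 1/2$.

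For the general case, axiom (W3) lets me swap $x$ and $y$ at the cost of replacing $\lambda$ by $1-\lambda$, and the hypotheses on $x$ and $y$ are symmetric, so it suffices to handle $\lambda \in [0, 1/2]$. The crucial step is the identity $(1-\lambda)x + \lambda y = (1-2\lambda)x + 2\lambda m$, which in the $W$-hyperbolic setting follows from (W2) and the compatibility of the $W$-function with reparameterizations along its induced segments. Granted this, axiom (W1) gives
\[
d\bigl((1-2\lambda)x + 2\lambda m,\, a\bigr) \;\leq\; (1-2\lambda)\,r + 2\lambda\,(1-\eta(s,\eps))\,r \;=\; (1 - 2\lambda\,\eta(s,\eps))\,r,
\]
and since $2\lambda(1-\lambda) \leq 2\lambda$ for $\lambda \in [0, 1/2]$, this is in fact stronger than the claimed $(1 - 2\lambda(1-\lambda)\eta(s,\eps))\,r$. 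The case $\lambda \in [1/2, 1]$ then follows from the case just proved and (W3), yielding the bound $(1 - 2(1-\lambda)\eta(s,\eps))\,r$, which likewise dominates the target because $2(1-\lambda) \geq 2\lambda(1-\lambda)$ in that range.

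The main obstacle I anticipate is justifying the identity $(1-\lambda)x + \lambda y = (1-2\lambda)x + 2\lambda m$ from the axioms (W1)-(W4) alone: while it is immediate in normed spaces and holds in CAT(0) spaces by uniqueness of geodesics, in a general Kohlenbach $W$-hyperbolic space the $W$-function specifies only a particular selection of convex combinations, and one must verify that iterated applications of $W$ along the same segment are consistent. The full derivation, matching distances from the endpoints via (W2), is the content of \cite[Lemma~2.1(iv)]{Leu10}, whose argument the proof of the present lemma follows.
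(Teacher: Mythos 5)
Your midpoint case and the symmetry reduction via (W3) are fine, and \emph{granted} the identity $(1-\lambda)x+\lambda y=(1-2\lambda)x+2\lambda m$ with $m=\frac12 x+\frac12 y$, the rest of the computation is correct (for $\lambda\le 1/2$ it even yields the stronger bound $(1-2\lambda\,\eta(s,\eps))r$). The genuine gap is exactly the step you flag: that identity is not a consequence of (W1)--(W4). The axioms control $W(x,y,\lambda)$ only through its distances to the endpoints (via (W1), (W4)) and to other points $W(x,y,\tilde\lambda)$ (via (W2)); nothing forces the point $W(x,m,2\lambda)$, selected by $W$ on the segment from $x$ to $m$, to coincide with $W(x,y,\lambda)$ --- the $W$-segment from $x$ to $m$ need not be a sub-segment of the $W$-segment from $x$ to $y$. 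The best one can extract from the axioms is $d\bigl(W(x,m,2\lambda),W(x,y,\lambda)\bigr)\le 2\lambda(1-2\lambda)\,d(x,y)$ (apply (W1) with $z=W(x,y,\lambda)$ and evaluate $d(z,x)$, $d(z,m)$ by (W7) and (W2)), which vanishes only at $\lambda\in\{0,1/2\}$; carrying this error term through your (W1) estimate destroys the claimed bound for intermediate $\lambda$ (the error can be of order $\lambda\, d(x,y)$, which may exceed $2\lambda(1-\lambda)\eta(s,\eps)r$). So the reduction to the midpoint, which is immediate in normed spaces and in CAT(0) spaces (unique geodesics), is precisely the part that does not transfer to general $W$-hyperbolic spaces.

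There is also a circularity in how you propose to close the gap: \cite[Lemma~2.1(iv)]{Leu10} is the statement being proved here (the general-$\lambda$ estimate with the monotone modulus), not a reparameterization lemma for $W$, so you cannot discharge the key identity by pointing to it. Note that the present paper does not prove the lemma either --- it only cites that source --- so the comparison is necessarily with the cited proof, which obtains the factor $2\lambda(1-\lambda)$ without identifying $W(x,y,\lambda)$ with a $W$-midpoint. As it stands, your proposal establishes the lemma for normed and CAT(0) spaces but leaves the general $UCW$-hyperbolic case open.
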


Obviously, uniformly convex normed spaces are  $UCW$-hyperbolic spaces. CAT(0) spaces \cite{AleKapPet19,BriHae99} are a very important class of spaces in geodesic geometry and geometric group theory.  
The first author showed in \cite{Leu07} that CAT(0) spaces are  $UCW$-hyperbolic spaces with 
the following modulus of uniform convexity $\displaystyle \eta(r,\eps)=\frac{\eps^2}{8}$, 
 that does not depend at all on $r$ and is quadratic in $\eps$.  

\subsection{Alternating Halpern-Mann iteration}

We give, in the very general setting of $W$-spaces, a reformulation of the iterative scheme  
\eqref{e:MannHalpern} introduced in \cite{DinPin21}. Let $X$ be a $W$-space, $C\subseteq X$ a 
convex subset and $T, U:C\to C$ be nonexpansive mappings. 
The alternating Halpern-Mann iteration is defined by the following scheme:
\begin{equation}\label{HM}
\begin{split}
x_{n+1} & = (1-\beta_n)Uy_n + \beta_n y_n,\\
y_{n} & = (1-\alpha_n)T x_n + \alpha_n u,
\end{split}
\end{equation}
where $x_0,u \in C$ and $(\alpha_n)_{n\in\N}$, $(\beta_n)_{n\in\N}$ are sequences in $[0,1]$.

Notice that for the iteration \eqref{e:MannHalpern}, the even terms correspond to our $(x_n)$ and the odd terms to our $(y_n)$. Indeed, this reformulation is just for convenience of notation.

\begin{remark}\label{remark-HM-TM-mH}
\begin{enumerate}
\item If $U=Id_C$, then $(y_n)$ is the Halpern iteration. 
\item\label{TM-mH} If $T=Id_C$, then $(x_n)$ is the Tikhonov-Mann iteration \cite{CheLeu22} and 
$(y_n)$ is the modified Halpern iteration \cite{KimXu05}. 
\item If $T=Id_C$ and $\alpha_n=0$, then $(x_n)$ is the Mann iteration.
\end{enumerate}
\end{remark}
\begin{proof}
\begin{enumerate}
\item By (W5), we have that $x_{n+1}=y_n$.
\item Consider the definitions from  \cite[p.4]{CheKohLeu23} and use \cite[Proposition~3.2]{CheKohLeu23}.
\item Apply (W6) to get that $y_n=x_n$. \qedhere
\end{enumerate}
\end{proof}

Let us recall some useful properties of the alternating Halpern-Mann iteration. In the sequel, $X$ is a $W$-hyperbolic space.

\blem
The following hold for all  $n\in \N$:
\begin{align}
d(y_{n+1}, y_n) & \le (1-\alpha_{n+1}) d(x_{n+1}, x_n) + \vert \alpha_{n+1}-\alpha_n \vert d(Tx_n,u), \label{dynp1-yn}\\
d(x_{n+2}, x_{n+1}) & \le  d(y_{n+1},y_n) + \vert \beta_{n+1} - \beta_n \vert d(Uy_n,y_n). \label{dxnp1-xn}
\end{align}
\elem
\begin{proof}
By the proof of \cite[Lemma~3.3]{DinPin21}. However, as the notations from \cite{DinPin21} are changed in this paper, we give the proofs below.
\begin{align*}
d(y_{n+1}, y_n) & = d((1-\alpha_{n+1})T x_{n+1} + \alpha_{n+1} u, (1-\alpha_n)T x_n + \alpha_n u)) \\
& \leq d((1-\alpha_{n+1})T x_{n+1} + \alpha_{n+1} u, (1-\alpha_{n+1})T x_n+\alpha_{n+1} u)\\
& \quad + d((1-\alpha_{n+1})T x_n + \alpha_{n+1} u, (1-\alpha_n)T x_n + \alpha_n u)\\
&  \leq (1-\alpha_{n+1}) d(T x_{n+1}, Tx_n) + \vert \alpha_{n+1}-\alpha_n \vert d(Tx_n,u)\\ 
& \text{by (W4) and (W2)}\\
&  \leq (1-\alpha_{n+1}) d(x_{n+1}, x_n) + \vert \alpha_{n+1}-\alpha_n \vert d(Tx_n,u), \\[1mm]
d(x_{n+2}, x_{n+1}) & = d((1-\beta_{n+1})Uy_{n+1} + \beta_{n+1} y_{n+1}, (1-\beta_n)Uy_n + \beta_n y_n)\\
& \leq d((1-\beta_{n+1})Uy_{n+1} + \beta_{n+1} y_{n+1}, (1-\beta_{n+1})Uy_n + \beta_{n+1} y_n)  \\
& \quad + d((1-\beta_{n+1})Uy_n + \beta_{n+1} y_n, (1-\beta_n)Uy_n + \beta_n y_n)\\
&  \leq (1-\beta_{n+1}) d(Uy_{n+1},Uy_n)+\beta_{n+1} d(y_{n+1},y_n)  \\
& \quad + \vert \beta_{n+1} - \beta_n \vert d(Uy_n,y_n) \quad \text{by (W4) and (W2)}\\
&  \leq d(y_{n+1},y_n) + \vert \beta_{n+1} - \beta_n \vert d(Uy_n,y_n). \qedhere
\end{align*}
\end{proof}

We assume, furthermore, that  $T$ and $U$ have common fixed points and let $p$ be a common fixed point of $T$ and $U$. 
Define 
\begin{equation}
M_p= \max\{d(x_0, p), d(u,p)\}. \label{def-Mp}
\end{equation}

\blem\label{lemma-Mp} 
For all  $n\in \N$,
\begin{enumerate}
\item\label{lemma-Mp-i} $d(x_n, p)\leq M_p$ and $d(y_n, p)\leq M_p$.
\item\label{lemma-Mp-ii} $d(x_{n+1}, x_n)\leq 2M_p$ and $d(y_{n+1}, y_n)\leq 2M_p$.
\item\label{lemma-Mp-iii} $d(Tx_n,u) \leq 2M_p$ and $d(Uy_n,y_n)\leq 2M_p$.
\end{enumerate}
\elem
\begin{proof}
For \eqref{lemma-Mp-i} see \cite[Lemma 3.2]{DinPin21}.  \eqref{lemma-Mp-ii}, \eqref{lemma-Mp-iii} follow easily from \eqref{lemma-Mp-i}.
\end{proof}

\blem
The following hold for all  $n\in \N$:
\begin{align}
d(y_{n+1}, y_n) & \le (1-\alpha_{n+1}) d(x_{n+1}, x_n) + 2M_p\vert \alpha_{n+1}-\alpha_n \vert, \label{as-reg-dy}\\
\!\!\!\! d(x_{n+2}, x_{n+1}) & \le  (1-\alpha_{n+1}) d(x_{n+1}, x_n) + 2M_p(\vert \alpha_{n+1}-\alpha_n \vert + 
\vert \beta_{n+1} - \beta_n \vert). \label{as-reg-dx}
\end{align}
\elem
\begin{proof}
Apply \eqref{dynp1-yn},  \eqref{dxnp1-xn}, and Lemma~\ref{lemma-Mp}.\eqref{lemma-Mp-iii}. 
\end{proof}

\subsection{Quantitative lemmas}

The following lemma due to Xu \cite{Xu02} is well-known, as it is  one of the main tools in 
different convergence proofs for nonlinear iterations.

\blem\label{lemma-Xu02}
Let $(s_n)_{n\in\N}$ be a sequence of nonnegative real numbers satisfying, for all $n\in\N$,
\[
s_{n+1}\leq (1-a_n)s_n + a_n b_n+c_n,
\]
where $(a_n)_{n\in\N}$ is a sequence in $[0,1]$ such that  $\sum\limits_{n=0}^\infty a_n$ diverges,
$(b_n)_{n\in\N}$ is a sequence of real numbers with $\limsup\limits_{n\to\infty} b_n \leq 0$,
 and $(c_n)_{n\in\N}$ is a sequence of nonnegative reals such that $\sum\limits_{n=0}^\infty c_n$ 
 converges.

Then $\lim\limits_{n\to\infty} s_n =0$.
\elem

Quantitative versions of this lemma were given for the first time in \cite{KohLeu12a} for 
the particular case $c_n=0$ and for the general case in \cite[Section~3]{LeuPin21}. We refer also to \cite[Section~2.3]{DinPin21}
for an extensive discussion on different quantitative variants of Xu's lemma. 
We shall use in this paper a quantitative version of another particular case of Lemma~\ref{lemma-Xu02}, 
obtained by taking $b_n=0$. 

\begin{proposition}\label{qXu}
Let $(s_n), (c_n)\subseteq [0,+\infty)$  and $(a_n)\subseteq [0,1]$ satisfy, for all $n\in\N$,
\[
s_{n+1}\leq (1-a_n)s_n + c_n.
\]
Assume that $L\in\N^*$ is an upper bound on $(s_n)$,  $\sum\limits_{n=0}^\infty a_n$  diverges with rate of 
divergence $\theta$, and  $\sum\limits_{n=0}^\infty c_n$ converges with Cauchy modulus $\chi$.

Then $\lim\limits_{n\to\infty} s_n=0$ with rate of convergence $\Sigma$ defined by 
\begin{equation}
\Sigma(k)=\theta\big(\chi(2k+1)+1+\lceil \ln(2L(k+1))\rceil\big)+1.
\end{equation}
\end{proposition}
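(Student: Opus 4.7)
The plan is to iterate the recursive inequality from a well-chosen starting index $m$ and bound the two resulting terms separately, using the divergence rate $\theta$ to control the product factor and the Cauchy modulus $\chi$ to control the tail of the series $\sum c_n$.

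First I would show by straightforward induction on $n \ge m$ that
\[
s_{n+1} \le \left(\prod_{i=m}^{n}(1-a_i)\right) s_m + \sum_{i=m}^{n} c_i\prod_{j=i+1}^{n}(1-a_j).
\]
Then, using the elementary bounds $1-x\le e^{-x}$ for $x\in[0,1]$ and $\prod_{j=i+1}^{n}(1-a_j)\le 1$, together with $s_m\le L$, this estimate simplifies to
\[
s_{n+1} \le L\exp\!\left(-\sum_{i=m}^{n} a_i\right) + \sum_{i=m}^{n} c_i.
\]

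The next step is to pick the starting index $m$ so that the second term is at most $\frac{1}{2(k+1)}$. Take $m:=\chi(2k+1)+1$; by the definition of a Cauchy modulus applied with $n_0=\chi(2k+1)$ and $p=n-\chi(2k+1)$, one has $\sum_{i=m}^{n} c_i \le \frac{1}{2k+2}$ for every $n\ge m-1$. For the first term, it suffices to guarantee $\sum_{i=m}^{n} a_i \ge \ln(2L(k+1))$, which forces $L\exp(-\sum_{i=m}^n a_i)\le \frac{1}{2(k+1)}$. Since each $a_i\le 1$, we have $\sum_{i=0}^{m-1}a_i\le m$, so it is enough that $\sum_{i=0}^{n} a_i \ge m + \ln(2L(k+1))$. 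Setting $M:=m + \lceil\ln(2L(k+1))\rceil = \chi(2k+1)+1+\lceil\ln(2L(k+1))\rceil$, the rate of divergence gives $\sum_{i=0}^{\theta(M)} a_i \ge M$, so for any $n\ge\theta(M)$ the required lower bound on $\sum_{i=m}^{n}a_i$ holds.

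Combining the two estimates, for every $n\ge\theta(M)$ (which is automatically $\ge m$) we obtain $s_{n+1}\le \frac{1}{2(k+1)}+\frac{1}{2(k+1)}=\frac{1}{k+1}$. Re-indexing, this gives $s_n\le \frac{1}{k+1}$ for all $n\ge \theta(M)+1 = \Sigma(k)$, which is exactly the claimed rate. The main obstacle is not conceptual but rather the careful bookkeeping of constants: tracking the extra $+1$ coming from the shift $m = \chi(2k+1)+1$, absorbing the partial sum $\sum_{i=0}^{m-1}a_i$ into the exponent, and rounding up $\ln(2L(k+1))$ to an integer so that the rate of divergence $\theta$ can be applied. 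Everything else is a routine telescoping of the recurrence.
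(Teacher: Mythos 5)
Your proof is correct, and the constant-tracking works out exactly to the stated $\Sigma(k)$ (including the check that $\theta(M)\geq m$, which follows from $a_i\leq 1$ and $\lceil\ln(2L(k+1))\rceil\geq 1$ since $L\geq 1$). The paper itself gives no argument here --- it only cites \cite[Lemma 2.9(1)]{DinPin21} --- and your telescoping-plus-$1-x\leq e^{-x}$ argument is precisely the standard proof behind that citation, so this matches the intended approach.
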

\begin{proof}
See \cite[Lemma 2.9(1)]{DinPin21}.
\end{proof}

Sabach and Shtern \cite[Lemma~3]{SabSht17} proved a particularly interesting version of Xu's 
lemma that allowed them, for a clever choice for the sequence $(a_n)$, to  obtain linear 
rates of asymptotic regularity for the sequential averaging method (SAM), a generalization 
of the Halpern iteration.  Recently, Cheval, Kohlenbach, and the first author \cite{CheKohLeu23} applied
\cite[Lemma~3]{SabSht17} to compute  linear rates for the Tikhonov-Mann iteration and
the modified Halpern iteration in $W$-hyperbolic spaces. 

In this paper we shall use a version of Sabach and Shtern's  lemma to obtain 
linear rates of asymptotic regularity for the alternating Halpern-Mann iteration, too. We give the proof, for completeness.

\begin{lemma}\label{lemma-SabSht-version}
Let $L>0$,  $J\geq N\geq 2$,  and $\gamma\in(0,1]$. Assume that $a_n=\frac{N}{\gamma(n+J)}$  and $c_n\leq L$ for all $n\in\N$.
Consider a sequence of nonnegative real numbers $(s_n)$ satisfying the following:
$s_0 \leq L$ and,  for all $n\in\N$,
\[
s_{n+1} \leq (1 - \gamma a_{n+1})s_n + (a_n-a_{n+1})c_n.
\]
Then 
\[
s_n\leq \frac{JL}{\gamma(n+J)} \quad \text{for all~} n\in \N.
\]
\end{lemma}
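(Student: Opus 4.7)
My plan is to prove the bound $s_n \leq \frac{JL}{\gamma(n+J)}$ by induction on $n$, with the main work being a direct algebraic manipulation in the inductive step. The inequality $N\ge 2$ together with $J\ge N$ will turn out to be exactly what is needed to close the induction.

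For the base case $n=0$, I would use that $\gamma\in(0,1]$, so $\frac{JL}{\gamma J}=\frac{L}{\gamma}\geq L\geq s_0$. Before turning to the inductive step, I would record two convenient reformulations derived from $a_n=\frac{N}{\gamma(n+J)}$: namely
\[
1-\gamma a_{n+1}=\frac{n+J+1-N}{n+J+1}\geq 0 \quad\text{(since $J\geq N$),}
\]
and
\[
a_n-a_{n+1}=\frac{N}{\gamma}\!\left(\frac{1}{n+J}-\frac{1}{n+J+1}\right)=\frac{N}{\gamma(n+J)(n+J+1)}\geq 0.
\]
These two identities make all subsequent estimates manifestly valid (no sign issues), and they immediately combine with $c_n\leq L$ to give usable upper bounds on both terms of the recursion.

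For the inductive step, assuming $s_n\leq \frac{JL}{\gamma(n+J)}$, I would substitute into the recursion and use $c_n\leq L$ to obtain
\[
s_{n+1}\leq \frac{n+J+1-N}{n+J+1}\cdot\frac{JL}{\gamma(n+J)}+\frac{NL}{\gamma(n+J)(n+J+1)}=\frac{L\bigl(J(n+J+1)-N(J-1)\bigr)}{\gamma(n+J)(n+J+1)}.
\]
Comparing this with the target $\frac{JL}{\gamma(n+J+1)}=\frac{JL(n+J)}{\gamma(n+J)(n+J+1)}$, the desired inequality reduces to $J(n+J+1)-N(J-1)\leq J(n+J)$, i.e.\ $N(J-1)\geq J$, equivalently $N\geq 1+\frac{1}{J-1}$. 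Since $J\geq 2$, we have $1+\frac{1}{J-1}\leq 2$, so the hypothesis $N\geq 2$ closes the induction.

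I expect the only mildly delicate point to be checking that the hypotheses $N\geq 2$ and $J\geq N\geq 2$ are used in a tight way: $J\geq N$ guarantees non-negativity of $1-\gamma a_{n+1}$, while $N\geq 2$ is exactly what makes $N(J-1)\geq J$ hold for all permissible $J$. Everything else is routine algebra, so there is no serious obstacle.
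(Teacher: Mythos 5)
Your proof is correct and follows essentially the same route as the paper: induction on $n$, with the base case from $\gamma\le 1$ and the inductive step by substituting the hypothesis and $c_n\le L$ into the recursion. The only (cosmetic) difference is that you close the final inequality by reducing it to $N(J-1)\ge J$, whereas the paper first bounds $N\le J$ in the second term and then invokes $N\ge 2$; both are valid.
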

\begin{proof}
We show the result by induction on $n$. 

$n=0$:  We trivially have that $s_0 \leq \frac{L}{\gamma}$, as $\gamma \in(0,1]$. 

$n\Rightarrow n+1$: We get that
\begin{align*}
s_{n+1} &\leq (1- \gamma a_{n+1})s_n+(a_n-a_{n+1})L \\
& \leq \left(1-\frac{N}{n+1+J}\right)\frac{JL}{\gamma(n+J)} + \left(\frac{N}{\gamma(n+J)} - \frac{N}{\gamma(n+1+J)}\right)L \\
& \quad \text{by the induction hypothesis}\\
& = \frac{(n+1+J-N)JL}{\gamma(n+1+J)(n+J)} +\frac{NL}{\gamma(n+J)(n+1+J)}\\
& \leq \frac{(n+1+J-N)JL}{\gamma(n+1+J)(n+J)} +\frac{JL}{\gamma(n+J)(n+1+J)} \quad \text{as~}  J\geq N\\
&=\frac{(n+J+2-N)JL}{\gamma(n+J)(n+1+J)} \leq \frac{JL}{\gamma(n+1+J)} \quad \text{as~}  N\geq 2. \qedhere
\end{align*}
\end{proof}

%%%%%%%%%%%%%%%%%%%%%%%%%%%%%%%%%%%%%%%%%%%%%%%%%%%%%%%%%%%%%%%%%%%%%%
\section{Main results}\label{main}
%%%%%%%%%%%%%%%%%%%%%%%%%%%%%%%%%%%%%%%%%%%%%%%%%%%%%%%%%%%%%%%%%%%%%%

The main results of the paper provide effective rates of ($T$- and $U$-)asymptotic regularity of the alternating Halpern-Mann iteration.

Consider throughout that $X$ is a $W$-hyperbolic space, $C\subseteq X$  is a convex subset, and 
$T, U:C\to C$ are nonexpansive mappings. We assume that $T$ and $U$ have common fixed points, 
hence the set $Fix(T)\cap Fix(U)$ is nonempty, where $Fix(T)$ (resp. $Fix(U)$ is the set of fixed points of $T$ (resp. $U$). 

The sequences $(x_n)$, $(y_n)$ are defined by \eqref{HM},  $p\in Fix(T)\cap Fix(U)$, 
$M_p$ is given by \eqref{def-Mp}, and $K\in\N^*$  is such that $K\geq M_p$.

We shall use the following quantitative hypotheses on the parameter sequences $(\alpha_n)$, $(\beta_n)$
from \eqref{HM}:\\[2mm]
\begin{tabular}{lll}
$(Q1)$ & $\lim\limits_{n \to \infty} \alpha_n=0$ with rate of convergence $\roneq$; \\[1mm]
$(Q2)$ & $\sum\limits_{n=0}^\infty \alpha_n$ diverges with rate of divergence $\rtwoq$; \\[1mm]
$(Q3)$ & $\sum\limits_{n=0}^\infty  \vert \alpha_{n+1}-\alpha_n\vert$ converges with Cauchy modulus $\rthreeq$; \\[1mm]
$(Q4)$ & $\sum\limits_{n=0}^\infty  \vert \beta_{n+1}-\beta_n \vert$ converges with Cauchy modulus $\rfourq$; \\[1mm]
$(Q5)$ & $\rfiveq \in \N$ satisfies $\rfiveq \geq 2$ and $\frac1\rfiveq \leq \beta_n \leq 1- \frac1\rfiveq$ 
for all $n\in N$. \\[2mm]
\end{tabular}

These hypotheses were used in \cite{DinPin21} for the quantitative study of the Halpern-Mann iteration in CAT(0) spaces. 
\mbox{}

\subsection{Asymptotic regularity in $W$-hyperbolic spaces}

The first asymptotic regularity results are essentially reformulations of 
\cite[Pro-position 3.3(i),(ii)]{DinPin21} 
obtained by using $\frac1{k+1}$ instead of $\eps$. 

\begin{proposition}\label{As-reg-W-prop}
Assume that (Q2), (Q3), and (Q4) hold. Define 
\begin{equation}\label{def-main-chi}
\chi:\N\to\N, \quad  \chi(k)=\max\{\rthreeq(4K(k+1)-1), \rfourq(4K(k+1)-1)\}. 
\end{equation}
The following hold:
\begin{enumerate}
\item\label{xn-as-reg} $(x_n)$ is asymptotically regular with rate $\rarxn$ defined by 
\begin{equation}
\rarxn(k) = \rtwoq\left(\chi(2k+1)+2+\lceil \ln(4K(k+1))\rceil\right)+1. \label{def-rarxn}
\end{equation}
\item\label{yn-as-reg} $(y_n)$ is asymptotically regular with rate $\raryn$ defined by
\begin{equation}
\raryn(k)=\max \left\{\rarxn(2k+1), \rthreeq(4K(k+1)-1)+1\right\}.\label{def-raryn}
\end{equation}
\end{enumerate}
\end{proposition}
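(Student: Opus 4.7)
The plan is to derive both asymptotic regularity rates from the quantitative recursions \eqref{as-reg-dx} and \eqref{as-reg-dy}, together with the uniform bound $d(x_{n+1},x_n),\,d(y_{n+1},y_n)\le 2M_p\le 2K$ provided by Lemma~\ref{lemma-Mp}.\eqref{lemma-Mp-ii}. The engine for \eqref{xn-as-reg} is the quantitative Xu lemma, Proposition~\ref{qXu}; part \eqref{yn-as-reg} then falls out by a direct splitting argument using part \eqref{xn-as-reg}.

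For \eqref{xn-as-reg}, I apply Proposition~\ref{qXu} to the sequence $s_n:=d(x_{n+1},x_n)$, with $a_n:=\alpha_{n+1}$, $c_n:=2M_p(|\alpha_{n+1}-\alpha_n|+|\beta_{n+1}-\beta_n|)$, and upper bound $L:=2K\in\N^*$, noting that \eqref{as-reg-dx} matches the hypothesis $s_{n+1}\le(1-a_n)s_n+c_n$ exactly. A rate of divergence for $\sum_n a_n=\sum_n\alpha_{n+1}$ is obtained by a one-step shift of $\rtwoq$ (taking $\theta(n):=\rtwoq(n+1)$, since then $\sum_{i=0}^{\theta(n)}\alpha_{i+1}\ge\sum_{j=0}^{\theta(n)+1}\alpha_j-\alpha_0\ge n+1-1=n$), which is precisely why the argument of $\rtwoq$ in the formula for $\rarxn$ is shifted by $2$ rather than $1$. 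A Cauchy modulus for $\sum_n c_n$ is produced by splitting the target tolerance $\frac{1}{k+1}$ equally between the series in (Q3) and (Q4); using $2M_p\le 2K$, it then suffices that each individual tail be below $\frac{1}{4K(k+1)}=\frac{1}{(4K(k+1)-1)+1}$, yielding exactly the $\chi$ of \eqref{def-main-chi}. Substituting $L$, $\theta$ and $\chi$ into the conclusion of Proposition~\ref{qXu} gives the stated $\rarxn$.

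For \eqref{yn-as-reg}, I do not re-invoke Proposition~\ref{qXu} but argue directly from \eqref{as-reg-dy}. Bounding $1-\alpha_{n+1}\le 1$ produces
\[
d(y_{n+1},y_n)\le d(x_{n+1},x_n)+2M_p|\alpha_{n+1}-\alpha_n|,
\]
so it suffices to make each summand at most $\frac{1}{2(k+1)}$. The first requirement is handled by part \eqref{xn-as-reg} at level $2k+1$, contributing the threshold $\rarxn(2k+1)$. The second requires $|\alpha_{n+1}-\alpha_n|\le\frac{1}{4K(k+1)}$; since the Cauchy-modulus convention controls tails of the form $\sum_{i=m+1}^{m+p}|\alpha_{i+1}-\alpha_i|$ starting one index above $m$, taking $p=1$ and $m=n-1$ shows that $n\ge\rthreeq(4K(k+1)-1)+1$ is sufficient, which explains the $+1$ appearing in \eqref{def-raryn}. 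Taking the maximum of the two thresholds delivers $\raryn$.

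The only substantive difficulty is the index bookkeeping: the coefficient of $s_n$ in \eqref{as-reg-dx} is $\alpha_{n+1}$ rather than $\alpha_n$, forcing a one-step shift in $\rtwoq$, and single-term bounds $|\alpha_{n+1}-\alpha_n|$ must be extracted from the Cauchy-modulus convention whose tails start at $m+1$ rather than at $m$. These two conventions together account for all the extra additive constants ($+1$ and $+2$) that appear in \eqref{def-rarxn} and \eqref{def-raryn}; beyond this, the argument is a routine application of Proposition~\ref{qXu} and of the definition of Cauchy modulus.
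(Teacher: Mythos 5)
Your proposal is correct and follows essentially the same route as the paper: part (i) is the same application of Proposition~\ref{qXu} with $s_n=d(x_{n+1},x_n)$, $a_n=\alpha_{n+1}$, $c_n=2M_p(|\alpha_{n+1}-\alpha_n|+|\beta_{n+1}-\beta_n|)$, $L=2K$ and $\theta(n)=\rtwoq(n+1)$, and part (ii) is the same splitting of \eqref{as-reg-dy} using (i) and (Q3). Your explicit bookkeeping for the shift in $\rtwoq$ and for extracting the single-term bound from the Cauchy modulus correctly fills in the details the paper leaves as ``one can easily see''.
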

\begin{proof}
\begin{enumerate}
\item  As \eqref{as-reg-dx} holds, we can apply Proposition~\ref{qXu} with $s_n=d(x_{n+1}, x_n)$, $a_n=\alpha_{n+1}$, and
$c_n=2K(\vert \alpha_{n+1}-\alpha_n \vert +  \vert \beta_{n+1} - \beta_n \vert)$. 
By Lemma \ref{lemma-Mp}.\eqref{lemma-Mp-ii}, $L=2K$ is an upper bound for 
$(s_n)$. Furthermore, one can easily see that $\chi$ defined by \eqref{def-main-chi} is a Cauchy modulus for 
$\sum\limits_{n=0}^\infty c_n$ and that $\theta(n)=\rtwoq(n+1)$ is a rate of divergence for $\sum\limits_{n=0}^\infty a_n$.
\item  By \eqref{as-reg-dy}, we get that for all $n\in\N$, 
\begin{align*}
d(y_{n+1}, y_n) & \le  d(x_{n+1}, x_n) + 2K\vert \alpha_{n+1}-\alpha_n \vert. 
\end{align*}
The fact that $\raryn$ is a rate of asymptotic regularity of $(y_n)$ follows easily 
from \eqref{xn-as-reg} and (Q3). 
\end{enumerate}
\end{proof}

\begin{remark}
The rates $\rarxn$ and $\raryn$  depend on $X$, $C$, $T$, $U$, $x_0$, $u$ only via  $K$, 
an integer upper bound of $M_p$ defined by \eqref{def-Mp}. In particular, if $C$ is bounded, 
then one can take $M_p$ to be the diameter $d_C$ of $C$ and $K=\left\lceil d_C\right\rceil$. 
\end{remark}

\begin{remark}
Without loss of generality we can assume that $\rthreeq$ is increasing (by taking instead of $\rthreeq$ the increasing mapping
$\rthreeq^M(k)=\max\{\rthreeq(i)\mid i \leq k\}$). In this case, 
\begin{equation}\label{yn-as-reg-increasing}
\raryn(k)=\rarxn(2k+1).
\end{equation}
\end{remark}
\begin{proof}
For all $k\in\N$, we have that $\rtwoq(k+1)\geq k$, as $\alpha_k\leq 1$. It follows that 
\begin{align*}
\rarxn(2k+1) & > \chi(4k+3) +2 \geq  \rthreeq(16K(k+1)-1) + 2 \\
& > \rthreeq(4K(k+1)-1)+1. \qedhere
\end{align*}
\end{proof}

By Remark~\ref{remark-HM-TM-mH}\eqref{TM-mH}, for $T=id_C$, $\rarxn$  (resp. $\raryn$) is a rate of asymptotic regularity of the Tikhonov-Mann (resp. modified Halpern) iteration. 
One can easily see that $\rarxn$ is slightly better than the rate obtained in \cite[Theorem~4.1(i)]{CheLeu22}. 
Furthermore, in the case that $\rthreeq$ is increasing, $\raryn$ given by \eqref{yn-as-reg-increasing} has a similar 
form with the rate computed in \cite[Proposition~4.4(i)]{CheKohLeu23}.

\subsubsection{Linear rates}\label{s-linear-rates}

Let us consider the following parameter sequences:
\begin{center}
$\alpha_n=\frac{2}{n+2}$ and $\beta_n=\beta\in(0,1)$.
\end{center}
As pointed out in \cite{DinPin21}, one can apply Proposition~\ref{As-reg-W-prop} to get exponential rates of asymptotic regularity.
The next result shows that we can  obtain, as an application of Lemma~\ref{lemma-SabSht-version}, linear rates of asymptotic regularity.
 
\begin{proposition}\label{linear-rate-as-reg}
For all $n\in \N$, 
\begin{equation}
d(x_n,x_{n+1})\leq \frac{4K}{n+2} \quad \text{and} \quad d(y_n,y_{n+1})\leq \frac{4K}{n+3}.
\end{equation}
Thus, $(x_n)$ is asymptotically regular with rate $\rarxns(k) = 4K(k+1)-2$ and  $(y_n)$ is asymptotically regular with rate 
$\raryns(k) = 4K(k+1)-3$. 
\end{proposition}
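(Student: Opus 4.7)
The strategy is to apply the Sabach--Shtern-type Lemma~\ref{lemma-SabSht-version} directly to the recurrence \eqref{as-reg-dx} to obtain the bound on $d(x_n, x_{n+1})$, and then to derive the bound on $d(y_n, y_{n+1})$ from \eqref{as-reg-dy} by substituting this first bound.

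Since $\beta_n = \beta$ is constant, the term $|\beta_{n+1} - \beta_n|$ in \eqref{as-reg-dx} vanishes. A short computation gives $\alpha_n - \alpha_{n+1} = \frac{2}{(n+2)(n+3)} > 0$, so \eqref{as-reg-dx} specializes to
\[
d(x_{n+2}, x_{n+1}) \leq (1 - \alpha_{n+1}) d(x_{n+1}, x_n) + 2M_p (\alpha_n - \alpha_{n+1}).
\]
This matches the hypothesis of Lemma~\ref{lemma-SabSht-version} with $s_n = d(x_{n+1}, x_n)$, the constant sequence $c_n = 2M_p$, and parameters $\gamma = 1$, $N = J = 2$, $L = 2K$: indeed, with these choices the lemma's sequence $a_n = \frac{N}{\gamma(n+J)}$ coincides with $\alpha_n = \frac{2}{n+2}$, so $\gamma a_{n+1} = \alpha_{n+1}$ and $a_n - a_{n+1} = |\alpha_{n+1} - \alpha_n|$. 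The initial condition $s_0 \leq L$ holds because $d(x_1, x_0) \leq 2M_p \leq 2K$ by Lemma~\ref{lemma-Mp}.\eqref{lemma-Mp-ii}. The lemma then yields $d(x_{n+1}, x_n) \leq \frac{JL}{\gamma(n+J)} = \frac{4K}{n+2}$, which is the first claimed bound.

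For the second bound, I substitute the just-obtained estimate together with the explicit value of $|\alpha_{n+1} - \alpha_n|$ into \eqref{as-reg-dy}. Writing $1 - \alpha_{n+1} = \frac{n+1}{n+3}$ and using $M_p \leq K$, the right-hand side becomes $\frac{4K(n+1)}{(n+2)(n+3)} + \frac{4K}{(n+2)(n+3)}$, and the two fractions collapse to $\frac{4K(n+2)}{(n+2)(n+3)} = \frac{4K}{n+3}$, as claimed. The two asymptotic regularity rates $\rarxns$ and $\raryns$ then follow by solving the inequalities $\frac{4K}{n+2} \leq \frac{1}{k+1}$ and $\frac{4K}{n+3} \leq \frac{1}{k+1}$ for $n$.

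There is no real obstacle here: the only decision is choosing $(\gamma, N, J)$ in Lemma~\ref{lemma-SabSht-version}, and the form $\alpha_n = \frac{2}{n+2}$ is tailored to fit the template $\frac{N}{\gamma(n+J)}$ exactly, so the telescoping term $a_n - a_{n+1}$ aligns with $|\alpha_{n+1} - \alpha_n|$, while the constant $\beta_n$ eliminates the only other inhomogeneous contribution in \eqref{as-reg-dx}.
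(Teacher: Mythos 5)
Your proof is correct and follows exactly the paper's route: it applies Lemma~\ref{lemma-SabSht-version} to the specialization of \eqref{as-reg-dx} with $s_n=d(x_n,x_{n+1})$, $\gamma=1$, $N=J=2$, $L=2K$, and then feeds the resulting bound into \eqref{as-reg-dy} to get the estimate for $(y_n)$. The only difference is that you spell out the details the paper leaves as ``one can easily see,'' including the telescoping computation for the $y$-bound.
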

\begin{proof}
Applying \eqref{as-reg-dx}, we get that for all $n\in\N$,
\begin{align}
d(x_{n+2}, x_{n+1}) & \le  \left(1-\frac{2}{n+3}\right) d(x_n,x_{n+1}) + \left(\frac{2}{n+2}-\frac{2}{n+3}\right)2K.
\end{align}
One can easily see that we can apply Lemma~\ref{lemma-SabSht-version} with $s_n=d(x_n,x_{n+1})$, 
$L=2K$, $N=J=2$, $\gamma=1$, $a_n=\alpha_n=\frac2{n+2}$, and
$c_n=2K$ to get that for all $n\in\N$, 
\[d(x_n,x_{n+1})\leq \frac{4K}{n+2}.\]
Furthermore, by \eqref{as-reg-dy}, we have  that for all $n\in\N$, 
\[d(y_n,y_{n+1})\leq  \frac{4K}{n+3}.\]
The conclusion follows immediately. 
\end{proof}

\subsection{$T$- and $U$-asymptotic regularity in $UCW$-hyperbolic spaces}

Rates of $T$- and $U$-asymptotic regularity for $(x_n)$, $(y_n)$ were also computed in \cite{DinPin21} in the setting of CAT(0) spaces. 
We now show that these results can be generalized to $UCW$-hyperbolic spaces.
 
\bprop\label{Delta-As-reg-UCW}
Let $(X,\eta)$ be a $UCW$-hyperbolic space. Assume that (Q1) and (Q5)  hold and that 
$(x_n)$ is asymptotically regular with rate $\Delta$. 
 
The following are satisfied:
\begin{enumerate}
 \item\label{U-As-reg-yn-UCW} $(y_n)$ is $U$-asymptotically regular with rate $\uraryn$ defined by 
\[\uraryn(k)=\max\{ \Delta(2P(k+1)-1), \roneq(2PK(k+1)-1)\},\]
where $\displaystyle P=\left\lceil\frac{\rfiveq^2}{\eta\left(K, \frac1{K(k+1)}\right)}\right\rceil$.
\item $\lim\limits_{n\to\infty} d(x_n,y_n)=0$ with rate of convergence
\begin{equation}
\rxnyn(k)=\max\{\Delta(2k+1), \uraryn(2k+1)\}. 
\end{equation}
\item  $(y_n)$ is $T$-asymptotically regular with rate  
\begin{equation}
\traryn(k)=\max\{\rxnyn(2k+1), \roneq(4K(k+1)-1)\}. 
\end{equation}
\item $(x_n)$ is $U$-asymptotically regular with rate  
\begin{equation}
\urarxn(k)=\max\{\rxnyn(4k+3),  \uraryn(2k+1) \}.
\end{equation}
\item $(x_n)$ is $T$-asymptotically regular with rate  
\begin{equation}
\trarxn(k)=\max\{\rxnyn(4k+3), \traryn(2k+1)\}.
\end{equation}
\end{enumerate}
\eprop
\begin{proof}
\begin{enumerate}
\item Let $k\in\N$ and $n\geq \uraryn(k)$. Assume that $d(Uy_n,y_n)>\frac1{k+1}$.
Since $d(y_n,p)\le K$, by Lemma~\ref{lemma-Mp}\eqref{lemma-Mp-i}, and $d(Uy_n,y_n)\leq d(Uy_n,p)+d(y_n,p) \leq 2d(y_n,p)$, we get that 
\begin{equation}
\frac1{2(k+1)} < d(y_n,p) \leq K. \label{2dynp}
\end{equation}
Furthermore, $d(Uy_n,p) \leq d(y_n,p)\leq K$, $d(Uy_n,y_n) > \frac1{K(k+1)}d(y_n,p)$, and $\frac1{K(k+1)} <  2$.
It follows that we can apply 
Lemma~\ref{prop_eta-s-r}  with $x=Uy_n$, $y=y_n$, $a=p$,  $r=d(y_n,p)$, $\eps=\frac1{K(k+1)}$,
 $\lambda=\beta_n$, and $s=K$ to conclude that
\begin{align*}
	d(x_{n+1},p)  & = d((1-\beta_n)Uy_n + \beta_ny_n, p) \\
	& \leq \left( 1- 2\beta_n(1-\beta_n)\eta\left(K, \frac1{K(k+1)}\right) \right)d(y_n,p)\\
	& = d(y_n,p)- 2d(y_n,p)\beta_n(1-\beta_n)\eta\left(K, \frac1{K(k+1)}\right)\\
	& \leq d(y_n,p)- 2d(y_n,p) \frac1{\rfiveq^2}\eta\left(K, \frac1{K(k+1)}\right)\\
    & \quad \text{as, ~by (Q5),~}  \beta_n, 1-\beta_n\geq \frac1{\rfiveq}\\
	& < d(y_n,p)- \frac1{(k+1)\rfiveq^2}\eta\left(K, \frac1{K(k+1)}\right) \quad \text{by \eqref{2dynp}}.
\end{align*}

Since, by (W1), $d(y_n,p)\leq (1-\alpha_n)d(Tx_n,p)+\alpha_nd(u,p) \leq d(x_n,p)+\alpha_nK$, we get that 
\begin{align*}
d(x_{n+1},p) &  < d(x_n,p) + \alpha_nK - \frac1{(k+1)\rfiveq^2}\eta\left(K, \frac1{K(k+1)}\right).
\end{align*}
It follows that 
\begin{align*}
\frac{1}{P(k+1)} &  \leq \frac1{(k+1)\rfiveq^2}\eta\left(K, \frac1{K(k+1)}\right)\\
& < d(x_n,p)-d(x_{n+1},p) + \alpha_nK \leq d(x_{n+1},x_n) + \alpha_nK \\
	& \leq  \frac{1}{P(k+1)}, \text{~as~}n\geq \uraryn(k).
\end{align*}
We have obtained a contradiction. 
\item[(ii)-(v)] are obtained easily from the following inequalities
\begin{align*}
d(x_n,y_n) & \leq d(x_{n+1},x_n) + d(x_{n+1},y_n) \\
& \stackrel{(W7)}{=} d(x_{n+1},x_n) + (1-\beta_n)d(Uy_n,y_n)\\
& \leq d(x_{n+1},x_n) + d(Uy_n,y_n),\\
d(Ty_n,y_n) & \leq d(Ty_n,Tx_n) + d(Tx_n, y_n) \stackrel{(W7)}{=} d(Tx_n,Ty_n) + \alpha_nd(Tx_n,u)\\
		& \leq d(x_n,y_n) + 2K\alpha_n,\\
d(Ux_n,x_n) & \leq d(Ux_n,Uy_n)+d(Uy_n, y_n)+d(y_n,x_n)\\
& \leq 2d(x_n,y_n) + d(Uy_n,y_n),\\
d(Tx_n,x_n) & \leq d(Tx_n,Ty_n)+d(Ty_n, y_n)+d(y_n,x_n)\\
& \leq 2d(x_n,y_n) + d(Ty_n,y_n). \qedhere
\end{align*}
\end{enumerate}
\end{proof}

\begin{corollary}\label{as-reg-UCW}
Assume  that $(X,\eta)$ is a $UCW$-hyperbolic space and (Q1)-(Q6) hold. Then $(x_n)$ and $(y_n)$ are 
$U$- and $T$-asymptotically regular with rates obtained from the ones in Proposition~\ref{Delta-As-reg-UCW}
by replacing $\Delta$ with $\rarxn$.
\end{corollary}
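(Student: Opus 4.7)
The plan is a straightforward composition: we already have in hand two results that, when chained together, deliver exactly the rates claimed. Under hypotheses (Q2), (Q3), and (Q4), Proposition~\ref{As-reg-W-prop}\eqref{xn-as-reg} furnishes a rate $\rarxn$ of asymptotic regularity of $(x_n)$, valid already in the weaker $W$-hyperbolic setting. On the other hand, the passage to the $UCW$-hyperbolic setting in Proposition~\ref{Delta-As-reg-UCW} was formulated so that its only input concerning $(x_n)$ is an abstract rate $\Delta$ of asymptotic regularity, together with the quantitative conditions (Q1) and (Q5) on the parameter sequences.

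The proof then proceeds in two bookkeeping steps. First, I would invoke Proposition~\ref{As-reg-W-prop}\eqref{xn-as-reg}: since the hypotheses (Q1)--(Q5) of the corollary include (Q2), (Q3), and (Q4), that proposition applies and yields the rate $\rarxn$ given by \eqref{def-rarxn}. Second, I would apply Proposition~\ref{Delta-As-reg-UCW} with $\Delta := \rarxn$; the remaining hypotheses needed there, namely the $UCW$-hyperbolic structure of $(X,\eta)$, (Q1), and (Q5), are all part of our assumptions. Items (i)--(v) of that proposition then produce directly the five rates $\uraryn$, $\rxnyn$, $\traryn$, $\urarxn$, $\trarxn$ of the corollary, obtained by substituting $\rarxn$ for $\Delta$ in the displayed formulas.

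There is no substantive obstacle: the analytic work has already been carried out in Propositions~\ref{As-reg-W-prop} and~\ref{Delta-As-reg-UCW}, and what remains is solely the observation that the hypotheses of the two results fit together under (Q1)--(Q5), with the rate produced by the first serving as the parameter $\Delta$ consumed by the second. I would also flag the apparent typo ``(Q1)-(Q6)'' in the statement, since only (Q1)--(Q5) are defined in the preceding text, and proceed under the natural reading (Q1)--(Q5).
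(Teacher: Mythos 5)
Your proposal is correct and follows exactly the paper's argument: invoke Proposition~\ref{As-reg-W-prop}\eqref{xn-as-reg} under (Q2)--(Q4) to obtain the rate $\rarxn$, then feed it as $\Delta$ into Proposition~\ref{Delta-As-reg-UCW}. Your remark that ``(Q1)-(Q6)'' should read ``(Q1)-(Q5)'' is also a fair catch, since only five quantitative hypotheses are defined.
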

\begin{proof}
By Proposition~\ref{As-reg-W-prop}\eqref{xn-as-reg},  under the hypotheses
(Q2), (Q3), (Q4),  $(x_n)$ is asymptotically regular with rate $\rarxn$. 
\end{proof}

The following observation is inspired by \cite[Remark 15]{Leu07}; see also \cite[Theorem~3.4]{Koh03} for 
a similar remark in the context of uniformly convex normed spaces.

\begin{remark}\label{eta-simple}
Assume that 
\begin{center}
(*) \quad $\eta(r,\eps)= \eps\cdot \tilde{\eta}(r,\eps)$ for some $\tilde{\eta}$ that 
increases with $\eps$ (for a fixed $r$). 
\end{center}
Then  Proposition~\ref {Delta-As-reg-UCW}\eqref{U-As-reg-yn-UCW} holds with  
\[\uraryntilde(k)=\max\{ \Delta(2\tilde{P}(k+1)-1), \roneq(2\tilde{P}K(k+1)-1)\}.\]
where $\tilde{P}=\left\lceil\frac{\rfiveq^2}{\tilde{\eta}\left(K, \frac1{K(k+1)}\right)}\right\rceil$.
\end{remark}
\begin{proof}
Let $k\in\N$ and $n\geq \uraryntilde(k)$. Assume that $d(Uy_n,y_n)>\frac1{k+1}$. Follow the proof of 
Proposition~\ref{Delta-As-reg-UCW}.\eqref{U-As-reg-yn-UCW}, but replace $\eps=\frac1{K(k+1)}$ 
with $\eps=\frac1{d(y_n,p)(k+1)}$. Then we get that 
\begin{align*}
d(x_{n+1},p)  & \leq d(y_n,p)- \frac2{(k+1)\rfiveq^2}\tilde{\eta}\left(K, \frac1{d(y_n,p)(k+1)}\right)\\
	& < d(y_n,p)- \frac1{(k+1)\rfiveq^2}\tilde{\eta}\left(K, \frac1{d(y_n,p)(k+1)}\right).
\end{align*}
As $d(y_n,p)\le K$ and $\tilde{\eta}$ is increasing with $\eps$, we have that $\tilde{\eta}\left(K, \frac1{d(y_n,p)(k+1)}\right)\geq 
\tilde{\eta}\left(K, \frac1{K(k+1)}\right)$.
It follows that 
\begin{align*}
	d(x_{n+1},p) < d(y_n,p)- \frac1{(k+1)\rfiveq^2}\tilde{\eta}\left(K, \frac1{K(k+1)}\right).
\end{align*}
Continue as in the proof of Proposition~\ref{Delta-As-reg-UCW}\eqref{U-As-reg-yn-UCW} with $\tilde{P}, \tilde{\eta}$ instead of $P,\eta$.
\end{proof}

\subsubsection{Quadratic rates in CAT(0) spaces}\label{quadratic-CAT0}

For the remainder of this section, consider $X$ to be a CAT(0) space. As CAT(0) spaces are $UCW$-hyperbolic spaces with modulus 
$\eta(r,\eps)=\frac{\eps^2}8$ that obviously satisfies (*) from Remark~\ref{eta-simple} with 
$\tilde{\eta}(r,\eps)=\frac{\eps}8$, we get the following. 

\begin{proposition}\label{U-as-reg-yn-CAT0}
Assume that (Q1) and (Q5)  hold and that 
$(x_n)$ is asymptotically regular with rate $\Delta$.  

Then  $(y_n)$ is $U$-asymptotically regular with rate $\uraryncat$ defined by 
\[\uraryncat(k)=\max\{ \Delta(2\pcat(k+1)-1), \roneq(2\pcat K(k+1)-1)\},\]
where $\pcat=8K\rfiveq^2(k+1)$.
\end{proposition}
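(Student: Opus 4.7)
The plan is to obtain Proposition~\ref{U-as-reg-yn-CAT0} as a direct specialization of Remark~\ref{eta-simple} to the CAT(0) case. All the work is in identifying the auxiliary modulus $\tilde{\eta}$ and computing the resulting constant $\tilde{P}$.

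First I would recall that, as mentioned at the end of Section~2.1, a CAT(0) space is a $UCW$-hyperbolic space with modulus of uniform convexity $\eta(r,\eps)=\eps^2/8$. The factorisation
\[\eta(r,\eps)=\eps\cdot\tilde{\eta}(r,\eps),\qquad \tilde{\eta}(r,\eps):=\frac{\eps}{8},\]
exhibits $\eta$ in the form required by hypothesis~(*) of Remark~\ref{eta-simple}, and since $\tilde{\eta}(r,\cdot)$ is linear in $\eps$ (in fact independent of $r$), it is trivially increasing in $\eps$ for any fixed $r$. Thus Remark~\ref{eta-simple} is applicable.

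Next I would compute the constant $\tilde{P}$ produced by Remark~\ref{eta-simple} for this choice of $\tilde{\eta}$. Plugging in $r=K$ and $\eps=1/(K(k+1))$ yields
\[\tilde{\eta}\!\left(K,\frac{1}{K(k+1)}\right)=\frac{1}{8K(k+1)},\]
hence
\[\tilde{P}=\left\lceil\frac{\rfiveq^{\,2}}{\tilde{\eta}(K,1/(K(k+1)))}\right\rceil=\lceil 8K\rfiveq^{\,2}(k+1)\rceil=8K\rfiveq^{\,2}(k+1)=\pcat,\]
the penultimate equality holding because $K$, $\rfiveq$ and $k+1$ are positive integers, so the argument of the ceiling is already an integer.

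Finally I would substitute $\tilde{P}=\pcat$ into the expression
\[\uraryntilde(k)=\max\{\Delta(2\tilde{P}(k+1)-1),\,\roneq(2\tilde{P}K(k+1)-1)\}\]
provided by Remark~\ref{eta-simple}, which gives exactly the rate $\uraryncat(k)$ stated in Proposition~\ref{U-as-reg-yn-CAT0}. There is no genuine obstacle here, since both the verification of~(*) and the arithmetic simplification of $\tilde{P}$ are elementary; the only point worth mentioning carefully is the removal of the ceiling, which is why the rate takes such a clean quadratic form in $\rfiveq$ (and, through $\Delta$ via Corollary~\ref{as-reg-UCW}, a quadratic-type dependence on $k$).
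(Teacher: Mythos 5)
Your proposal is correct and is exactly the paper's argument: the paper derives this proposition by noting that CAT(0) spaces are $UCW$-hyperbolic with $\eta(r,\eps)=\eps^2/8$, which satisfies (*) of Remark~\ref{eta-simple} with $\tilde{\eta}(r,\eps)=\eps/8$, and then reading off $\tilde{P}=8K\rfiveq^2(k+1)=\pcat$. Your additional observation that the ceiling can be dropped because $8K\rfiveq^2(k+1)$ is already an integer is a correct (and implicitly used) detail.
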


Furthermore, Proposition~\ref{Delta-As-reg-UCW} holds with $\uraryn$ replaced by $\uraryncat$. 
As in Corollary~\ref{as-reg-UCW}, if, moreover, (Q2)-(Q4) hold, then one can take $\Delta=\rarxn$, 
with $\rarxn$ defined by \eqref{def-rarxn}. One can easily verify that our results, when particularized to CAT(0) spaces, actually recover the rates of ($T$- and $U$-)asymptotic regularity from \cite{DinPin21}.

For the particular parameter sequences considered in Subsubsection~\ref{s-linear-rates}, 
we get quadratic rates of $T$- and $U$-asymptotic regularity.

\begin{proposition}
Let $\alpha_n=\frac{2}{n+2}$ and $\beta_n=\beta\in(0,1)$.  Define 
\begin{equation} \label{def-Lambda-beta}
\rfiveq =\left\lceil\max\left\{\frac1\beta, \frac1{1-\beta}\right\}\right\rceil.
\end{equation}
The following hold:
\begin{enumerate}
\item $\uraryns(k)\!=\!2^6K^2 \rfiveq^2(k+1)^2\!-\!2$ is a rate of $U$-asymptotic regularity of $(y_n)$.
\item $\lim\limits_{n\to\infty} d(x_n,y_n)=0$ with rate of convergence $\rxnyns(k)= 2^8K^2 \rfiveq^2(k+1)^2\!-\!2$.
\item  $\traryns(k)=2^{10}K^2 \rfiveq^2(k+1)^2\!-\!2$ is a rate of $T$-asymptotic regularity of $(y_n)$.
\item $\urarxns(k)=2^{12}K^2 \rfiveq^2(k+1)^2\!-\!2$ is a rate of $T$- and $U$-asymptotic regularity of $(x_n)$. 
\end{enumerate}
\end{proposition}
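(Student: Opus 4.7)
The plan is to specialize Corollary~\ref{as-reg-UCW} to the CAT(0) case (via Proposition~\ref{U-as-reg-yn-CAT0}), using the linear rate $\rarxns$ from Proposition~\ref{linear-rate-as-reg} in place of the generic $\Delta$, and simply chase through the arithmetic. So the work divides into a brief verification of hypotheses and then four mechanical substitutions.

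First I would verify the quantitative hypotheses that the specialized iteration satisfies. For $(Q1)$, $\alpha_n=\frac{2}{n+2}\to 0$ with rate $\roneq(k)=2k$, since $\frac{2}{n+2}\le \frac{1}{k+1}$ iff $n\ge 2k$. For $(Q5)$, the constant sequence $\beta_n=\beta$ satisfies $\frac{1}{\rfiveq}\le \beta\le 1-\frac{1}{\rfiveq}$ by the very definition \eqref{def-Lambda-beta}. Hypotheses $(Q2)$--$(Q4)$ were already checked in Subsubsection~\ref{s-linear-rates} to derive Proposition~\ref{linear-rate-as-reg}, so $\rarxns(k)=4K(k+1)-2$ is an asymptotic regularity rate for $(x_n)$, and I can use it as $\Delta$ in the CAT(0) framework.

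Next I would compute (i) by applying Proposition~\ref{U-as-reg-yn-CAT0} with $\Delta=\rarxns$ and $\pcat=8K\rfiveq^2(k+1)$. One gets
\[
\rarxns\bigl(2\pcat(k+1)-1\bigr)=4K\cdot 2\pcat(k+1)-2 = 2^6 K^2\rfiveq^2(k+1)^2-2,
\]
while $\roneq(2\pcat K(k+1)-1)=4\pcat K(k+1)-2 = 2^5 K^2\rfiveq^2(k+1)^2-2$, so the maximum is exactly $\uraryns(k)$. For (ii)--(iv) I would simply feed $\uraryns$ (playing the role of $\uraryn$) together with $\rarxns$ into the formulas of Proposition~\ref{Delta-As-reg-UCW}(ii)--(v). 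Each of these formulas doubles the argument once or twice and then takes a max against a term of the form $\roneq(cK(k+1)-1)$; since $\roneq$ is linear and the other term is quadratic in $k+1$, the quadratic term always dominates, and the arithmetic yields the factors $2^8$, $2^{10}$, $2^{12}$ advertised in (ii), (iii), (iv).

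There is no real obstacle here — the proof is entirely a careful bookkeeping exercise. The one point that deserves attention is confirming that in each of the outer maxima the term coming from $\roneq$ is indeed absorbed by the one coming from $\uraryns$ or $\rxnyns$; once this monotonicity is in place, the stated rates follow by direct substitution. I would therefore present the proof as a single computation listing each of the four maxima and pointing, for each, to the specific item of Proposition~\ref{Delta-As-reg-UCW} (with $\uraryn$ replaced by $\uraryncat$) that yields it.
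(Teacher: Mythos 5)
Your proposal is correct and follows essentially the same route as the paper: verify $(Q1)$ with $\roneq(k)=2k$ and $(Q5)$ with $\rfiveq$ as in \eqref{def-Lambda-beta}, take $\Delta=\rarxns$ from Proposition~\ref{linear-rate-as-reg}, apply Proposition~\ref{U-as-reg-yn-CAT0} for (i), and then substitute into Proposition~\ref{Delta-As-reg-UCW}(ii)--(v); your arithmetic (including the observation that the $\roneq$-terms are dominated by the quadratic ones) checks out. The only cosmetic slip is attributing Proposition~\ref{linear-rate-as-reg} to a verification of $(Q2)$--$(Q4)$, whereas it is obtained directly from Lemma~\ref{lemma-SabSht-version}; this does not affect the argument since all you need is that $\rarxns$ is a rate of asymptotic regularity for $(x_n)$.
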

\begin{proof}
Obviously, $\roneq(k)=2k$ is a rate of convergence for $\left(\frac{2}{n+2}\right)$.  By 
Proposition~\ref{linear-rate-as-reg}, $\rarxns(k)=4K(k+1)-2$ is a rate of asymptotic regularity of  $(x_n)$.  Furthermore, $(Q5)$ holds with $\Lambda$ defined by \eqref{def-Lambda-beta}.
To obtain (i) apply Proposition~\ref{U-as-reg-yn-CAT0} with $\Delta=\rarxns$.
For (ii)-(iv) we use  Proposition~\ref{Delta-As-reg-UCW}(ii)-(v) with $\Delta=\rarxns$ and $\uraryn=\uraryns$. 
\end{proof}

\mbox{}

\section*{Acknowledgements}

The first author thanks Adriana Nicolae for useful discussions on the subject of the paper.
The second author was supported by the German Science Foundation (DFG Project KO 1737/6-2).

\end{document}